\newtheorem{lem}{Lemma}
\newtheorem{thm}{Theorem}
\newtheorem{app}{Approximation}
\newtheorem{rem}{Remark}
\newtheorem{example}{Example}
\title{Finite-Time stabilization of linear systems with unknown control direction via Extremum Seeking}
\author{
 Adriano Mele \\
  Consorzio CREATE\\
  via Claudio 21, 80125 Napoli \\
  \texttt{adriano.mele@unina.it} \\
   \And
 Gianmaria De Tommasi \\
  Dipartimento di Ingegneria Elettrica e Tecnologie dell'Informazione\\
  Università degli Studi di Napoli Federico II,\\
  Consorzio CREATE\\
  via Claudio 21, 80125 Napoli \\
  \texttt{detommas@unina.it} \\
  \And
 Alfredo Pironti \\
  Dipartimento di Ingegneria Elettrica e Tecnologie dell'Informazione\\
  Università degli Studi di Napoli Federico II,\\
  Consorzio CREATE\\
  via Claudio 21, 80125 Napoli \\
  \texttt{pironti@unina.it} \\
}
\begin{document}

\maketitle

\begin{abstract}
In this paper the finite-time stabilization problem is solved for a linear time-varying system with unknown control direction by exploiting a modified version of the classical extremum seeking algorithm. We propose to use a suitable oscillatory input to modify the system dynamics, at least in an \emph{average} sense, so as to satisfy a Differential Linear Matrix Inequality~(DLMI) condition which in turns guarantees that the system's state remains inside a prescribed time varying hyper-ellipsoid in the state space. The finite-time stability~(FTS) of the averaged dynamics implies the~FTS of the original system, as the distance between the original and the averaged dynamics can be made arbitrarily small by choosing a sufficiently high value of the dithering frequency used by the extremum seeking algorithm. An estimate of the necessary minimum dithering/mixing frequency is provided, and the effectiveness of the proposed finite-time stabilization approach is analysed by means of numerical examples.
\end{abstract}

\keywords{
Extremum Seeking, Finite-Time Stability, Lie-bracket averaging
}

\section{Introduction}\label{sec:intro}
Extremum Seeking (ES) was originally introduced in~\cite{leblanc:1922} as a method to find (local) extrema of an unknown function, possibly the output of a dynamical system, which depends on one or more tunable parameters. The gist of this technique is to start from a rough estimate of the optimal parameters value and then exploit a sinusoidal perturbation to explore the unknown map around said estimate in order to move towards a local optimum.
%
A formal proof of the stability of ES applied to stable nonlinear systems with an unknown output map first appeared in the literature in 2000 (see~\cite{krstic:2000} and the references therein), which made use of a combination of averaging and singular perturbation theory.

A first attempt to extend this technique to simple linear marginally stable and unstable systems can be found in~\cite{zhang:2007}, where a method for tracking a target emitting a signal in the absence of any position measurement was proposed for autonomous vehicles. 
Although~\cite{zhang:2007} regarded the stability properties of the considered system as an obstacle for the optimization the output functional, the stabilization of the system can also be considered a goal by itself; in this view, modified ES algorithms that minimize Lyapunov-like functions have been proposed. 
In particular, a possible stabilizing~ES technique was originally introduced in~\cite{stankovich:2011}, where the authors analyzed the link between the trajectories of a system excited by a periodic, zero-average perturbation and the associated \emph{Lie bracket averaged} system~\cite{gurvits:1992}. In particular, it can be shown that the trajectories of the original system converge uniformly to those of the averaged system as the parameter~$\varepsilon$, linked to the frequency and the amplitude of the perturbation, tends to~0. Moreover, exploiting the notion of \emph{semi-global practical stability} introduced in~\cite{moreau:2000}, it can be shown that, if the Lie-bracket averaged system is globally uniformly asymptotically stable, then the original system is practically globally uniformly asymptotically stable for a sufficiently small value of~$\varepsilon$, i.e. its trajectories are confined in a~$\mathcal{O}(\varepsilon)$ neighborhood of the origin of the state-space. 
Based on that, in~\cite{scheinker:thesis, krstic:book} the authors analyse the stabilizing properties of the proposed ES scheme for a variety of systems (including~linear time varying and non-linear, non-affine in control systems) using different dithering signals. The proposed methodology is applied to the problem of tuning the quadrupole magnets and the bouncer cavities of a particle accelerator installed at the Los Alamos Neutron Science Centre.
A great advantage of this stabilization technique is that it is capable of dealing with systems whose control direction is unknown.

Inspired by these works, in the present article we try to extend these~ES stabilization results to a different kind of stability property, namely the Finite-Time Stability~(FTS) of linear dynamical systems~\cite{FTSbook}. 

Finite-Time stabilization is a concept linked to, but independent from, Lyapunov stabilization. In particular, a system is said to be~FTS with respect to a given time-horizon~$T$, an initial time instant~$t_0$, a positive-definite symmetric matrix~$R$ and a positive-definite symmetric matrix-valued function of time~$\Gamma(t)$ defined over the time interval~$\left[t_0\,, t_0+T\right]$, if the state trajectory starting from a point inside the hyper-ellipsoid defined by~$x_0^TRx_0 \le 1$ stays inside the time-varying hyper-ellipsoid defined by~$x^T(t)\Gamma(t)x(t)<1$.

The concept of FTS, originally introduced in the control literature in the 60's~\cite{dorato:1961, weiss:1967, Michel:1972}, has seen a renewed interest when efficient computational tools to solve algebraic~(LMI) and Differential~(DLMI) Linear Matrix Inequality problems became available, allowing to verify ``practical" FTS conditions~\cite{amato:1998, amato:2001} for linear time-varying~(LTV) systems. 
More recently, the FTS problem has been tackled for hybrid systems~\cite{zhao2008finite,amato2011finite,amato2013necessary} as well as in the stochastic framework~\cite{luan2010finite,yan2013finite,tartaglione2019annular}. Such increasing interest in~FTS and in the associated input-output notion (IO-FTS~\cite{Amato:TAC2012}) comes from the possibility of effectively adopt FTS concepts to enforce specific quantitative requirements on the transient of the closed-loop response of a control system~\cite{amato2014input,ariola2018hybrid}. 

The main idea of the present work is to apply the FTS stabilization techniques available in the literature to the Lie bracket averaged model obtained applying the ES controller to a~LTV plant, possibly with unknown control direction, and then exploit the uniform convergence of the trajectories of the original system to those of the averaged one to draw conclusions on its FTS properties.

The rest of the paper is organized as follows:
 Section~\ref{sec:preliminary} gives an overview of the mathematical background, including the main concepts of Lie bracket averaging and Finite-Time Stabilization; Section~\ref{sec:FTS} presents the application of the Finite-Time control techniques presented in~\cite{FTSbook} to the Lie-Bracket averaged system. In section~\ref{sec:bounds} some practical indications are given for the choice of the dithering frequency of the ES scheme, and finally Section~\ref{sec:examples} shows some examples of the application of the proposed technique. Section~\ref{sec:conclusion} concludes the article.

\section{Background overview}\label{sec:preliminary}

In this section, some preliminary concepts are introduced. In particular, in Section~\ref{sec:liebracket} the notion of \emph{Lie bracket averaged} system associated to a dynamical system subject to periodic inputs is presented; in Section~\ref{sec:convtraj} the notion of systems with \emph{converging trajectories}, i.e. state trajectories whose distance can be made arbitrarily small by acting on a parameter, is discussed. Finally, in Section~\ref{sec:FTSintro} the concept of FTS of a linear system is recalled, together with some necessary and sufficient conditions.

\emph{Notation:}
in the following, $|| \cdot ||$ denotes the norm of a matrix, while $|\cdot|$ denotes the norm of a vector. Moreover, given two symmetric matrices $M$ and $N$, $M \prec 0$ indicates that $M$ is negative-definite, $M \succ 0$ that it is positive-definite,~$M \preceq 0$ that it is negative-semidefinite, $M \succeq 0$ that it is positive-semidefinite; $M\prec N$ is equivalent to~$M-N\prec 0$ and similarly for $M\preceq N$, $M\succ N$, $M\succeq N$. $Dom (\cdot)$ indicates the domain of a function. Finally, $[ f(\cdot) ]_{t_0}^t := f(t)-f(t_0)$.

\subsection{Lie bracket averaging}\label{sec:liebracket}

Consider a system in the general form

\begin{equation}\label{eq:NLsystem}
\begin{aligned}
    \dot{x}(t) &= \sum_{i=1}^{m_1}{b_i(x)u_i(t)} 
     + \frac{1}{\sqrt{\varepsilon}}\sum_{i=1}^{m_2}{\hat{b}_i(x)\hat{u}_i(t,\theta)} \\
     &x(t_0) = x_0\,,
\end{aligned}
\end{equation}

where the functions $\hat{u}_i(t\,,\theta)$ are $T_u$-periodic in $\theta = t/\varepsilon$ with zero-average on the period $T_u$. 
The \emph{Lie-bracket averaged} system~\cite{krstic:book} associated to~\eqref{eq:NLsystem} is

\begin{equation}\label{eq:liebracketavg}
\begin{aligned}
    \dot{\bar{x}}(t) &= \sum_{i=1}^{m_1}{b_i(\bar{x})u_i(t)} 
     + \frac{1}{T_u}\sum_{i=1\,, i<j}^{m_2}{ [\hat{b}_i\,, \hat{b}_j](\bar{x}) \nu_{ij}(t)} \\ 
     &\bar{x}(t_0) = x(t_0)\,,
\end{aligned}
\end{equation}

where $\nu_{ij}(t)$ is defined as
\begin{equation*}\label{eq:nu}
     \nu_{ij}(t) = \int_0^{T_u}{\int_0^{\theta}{\hat{u}_i(t,\sigma)\hat{u}_j(t,\theta)d\sigma d\theta}}\,,
\end{equation*}

and $[\hat{b}_i\,, \hat{b}_j](x)$ is the standard \emph{Lie bracket} of $\hat{b}_i(x)\,, \hat{b}_j(x)$
\begin{equation*}
     [\hat{b}_i, \hat{b}_j](x) = \frac{\partial \hat{b}_j(x)}{\partial x} \hat{b}_i(x) - \frac{\partial \hat{b}_i(x)}{\partial x} \hat{b}_j(x)
\end{equation*}


Note that this definition holds for all the integer multiples $nT_u$ of the period $T_u$.

\subsection{Converging trajectories property} \label{sec:convtraj}

The basic hypothesis that underlies the method proposed in this work is the \emph{convergence of trajectories}~\cite{moreau:2000} for the original and the average systems. Consider a generic systems 

\begin{equation}\label{eq:sys}
\dot{x}(t) = f(t\,,x)\,,
\end{equation}

and its perturbed counterpart

\begin{equation}\label{eq:syseps}
\dot{x}^{(\varepsilon)}(t) = f^{(\varepsilon)}(t\,,x^{(\varepsilon)})\,,
\end{equation}

where the superscript $(\varepsilon)$ indicates the dependence of the system dynamics on the small parameter $\varepsilon$ (e.g. as in~\eqref{eq:NLsystem}). Denote by $\Phi(t,t_0,x_0)$ and $\Phi^\varepsilon(t,t_0,x_0)$ the solutions of~\ref{eq:sys} and~\ref{eq:syseps} respectively passing through the point $x_0$ at $t=t_0$.
Systems~\ref{eq:sys}-\ref{eq:syseps} are said to have \emph{converging trajectories} if, $\forall~\hat{T}>0$, $\forall~K$ compact subset of $\mathbb{R}^n$ such\linebreak 
that $\left\{ t\in[t_0\,,t_0+\hat{T}], x\in K \right\} \in Dom(\Phi)$ and $\forall~\Delta>0$, $\exists~\varepsilon^* >0 : \forall~t_0 \in \mathbb{R}, \forall~x_0 \in K, \forall \varepsilon \in (0,\varepsilon^*)$
\[
    \left|\Phi^{(\varepsilon)}(t,t_0,x_0)- \Phi(t,t_0,x_0)\right| < \Delta\,, \qquad \forall t \in [t_0, t_0+\hat{T}]\,.
\]

In particular, this property holds for a given system in the form~\eqref{eq:NLsystem} and the corresponding Lie bracket averaged system~\eqref{eq:liebracketavg}~\cite{krstic:book,moreau:2000,durr:2013}, in the sense that, given the period $T_u<0$, $n\in\mathbb{N}$, then $\exists~\varepsilon^* >0 : \forall~\varepsilon \in (0,\varepsilon^*)$
\[
    \max_{t\in[0,nT_u]}{|x(t)-\bar{x}(t)|} \le \Delta(nT_u\,,\varepsilon)\,,
\]
where $\Delta \rightarrow 0$ as $\varepsilon \rightarrow 0$.

\subsection{Finite-Time Stabilization (with ellipsoidal domains)}\label{sec:FTSintro}

We now recall the definition of FTS~\cite{FTSbook} of a LTV system. 
Generally speaking, given a positive-definite, symmetric matrix $R$ and a positive definite, symmetric matrix-valued function $\Gamma(t)$ defined over a time interval $[t_0, t_0+T]$, an autonomous LTV system in the form
\begin{equation}\label{eq:LTV}
\dot{x}(t) = A(t)x(t)\,, \qquad x(t_0) = x_0\,,
\end{equation}
is said to be FTS with respect to $\left(t_0\,,T\,,\Gamma(\cdot)\,,R\right)$ iff, by definition, 
\begin{equation*}\label{eq:FTS}
x_0^T R x_0 \le 1 \implies x^T(t)\Gamma(t)x(t) < 1\,,  t \in \left[ t_0\,, t_0 + T \right]\,.
\end{equation*}
Note that, for this definition to be well-posed, it must hold true that $\Gamma(t_0) \prec R$. 

In~\cite[Thm.~2.1]{FTSbook} several equivalent conditions are given in order to assess if a system in the form~\eqref{eq:LTV} is FTS. 
These conditions can be extended to the state feedback closed-loop system 
\begin{subequations}\label{eq:systemABCD2}
\begin{equation}
\dot{x}(t) = A(t)x(t) + B(t)u(t) 
\end{equation}
\begin{equation} \label{eq:statefeedback}
u = K(t) x(t)
\end{equation}
\end{subequations}

In particular, the system~\eqref{eq:systemABCD2} is said to be Finite Time Stabilizable by a linear static state feedback control law wrt~$\left(t_0\,,T\,,\Gamma(\cdot)\,,R\right)$ iff~\cite[Thm.~3.1]{FTSbook}
\begin{equation}\label{eq:FTScond}
\left\{
\begin{aligned}
  -\dot{Q}(t) &+ Q(t)A^T(t) + A(t)Q(t) + L^T(t)B^T(t) \\
  &\begin{aligned}
    + B(t)L(t) & \prec  0 \\
  Q(t) & \prec  \Gamma^{-1}(t)\,, & t \in [t_0\,, t_0+T] \\
  Q(t_0) & \succ R^{-1}\,, & t \in [t_0\,, t_0+T]
  \end{aligned}
\end{aligned}
\right.
\end{equation}

If condition~\eqref{eq:FTScond} is satisfied for some $Q(t)$ and $L(t)$, then the controller gain that FT-stabilizes the system is given by
\begin{equation}\label{eq:controlgain}
    K(t) = L(t)Q^{-1}(t)\,.
\end{equation}

\section{Finite-Time stabilization via Extremum-Seeking and Lie bracket averaging}\label{sec:FTS}
We are now ready to introduce the main contribution of this paper, namely the finite-time stabilization of single-input LTV systems via extremum seeking.

Let us consider a single-input LTV system in the form
\begin{subequations}\label{eq:LTVcase}
\begin{equation}\label{eq:systemABCD}
\dot{x}(t) = A(t)x + B(t)u\,, \qquad x(t_0) = x_0\,,
\end{equation}
where $B(t)\in \mathbb{R}^{n\times 1}$, and the following control law
\begin{equation} \label{eq:ESinput}
u(t) = \alpha  \sqrt{\omega} \cos(\omega t) - k \sqrt{\omega} \sin(\omega t) x^T \Pi(t) x\,,
\end{equation}
\end{subequations}
where the $\Pi(t)$ matrix is assumed to be symmetric and positive-definite.
According to~\eqref{eq:liebracketavg}, the Lie-bracket averaged system corresponding to~\eqref{eq:LTVcase} is
\begin{equation}\label{eq:avgLTV}
    \dot{\bar{x}}(t) = A(t)\bar{x} - k\alpha B(t)B^T(t)\Pi(t)\bar{x}\,.
\end{equation}
where $\frac{1}{\omega}$ plays the role of the small parameter $\varepsilon$.

If the averaged system can be finite-time stabilized, then the converging trajectories property stated in Section~\ref{sec:convtraj} ensures that the state of the closed-loop system will be drawn towards the desired region of the phase space as $\omega \rightarrow \infty$.

Clearly, the FTS of the averaged system does not automatically imply the FTS of the closed loop system, since the oscillations of $x(t)$ around the averaged trajectory $\bar{x}(t)$ could still violate the requirement $x^T(t)\Gamma(t)x(t)<1$. However, thanks to the convergence of trajectories, for any given value of $\Delta$ (i.e. the maximum allowed distance between $x$ and $\bar{x}$) it is always possible~\cite{durr:2013, gurvits:1992} to find a minimum frequency $\omega^*$ such that $|x-\bar{x}|<\Delta \quad \forall \omega>\omega^*$. 
Thus, proper FT stabilization of system~\eqref{eq:LTVcase} can be achieved by FT stabilizing the averaged system~\eqref{eq:avgLTV} with respect to an opportune smaller region in state-space, and then by choosing a frequency $\omega$ such that the distance between the boundaries of these two regions is not exceeded by the distance of the state trajectory from its average (see Fig.~\ref{fig:lemma1}).
\begin{figure}
    \centering
    \includegraphics[width=0.55\linewidth]{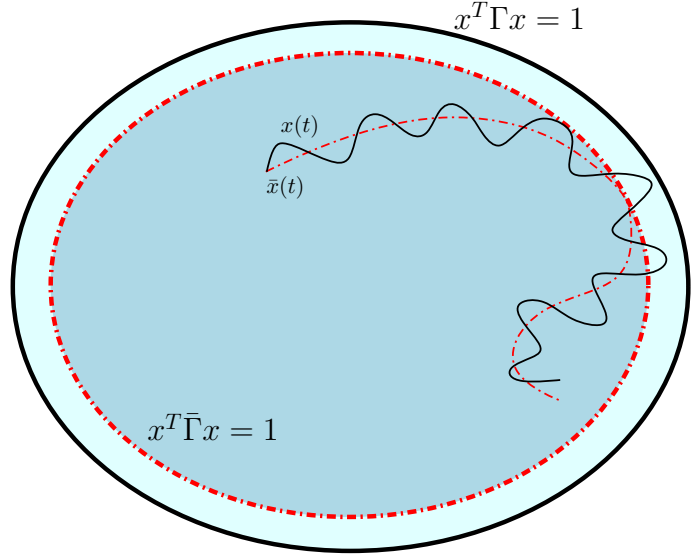}
    \caption{The aim of the proposed technique is to finite-time stabilize the averaged dynamics $\bar{x}(t)$ (dash-dotted red trajectory) with respect to $\bar{\Gamma}(t)$ (the ellipse defined by $x^T \bar{\Gamma} x = 1$ is shown by the dash-dotted red line). If the distance between $x(t)$ and $\bar{x}(t)$ is bounded, the matrix $\bar{\Gamma}(t)$ can be chosen so as to guarantee that the state trajectory $x(t)$ is FTS with respect to $\Gamma(t)$ by choosing $\bar{\Gamma}(t)$ appropriately.}
    \label{fig:lemma1}
\end{figure}

This observation leads us to establish the following lemma.


\begin{lem}\label{lemma1}
Consider the two hyper-ellipses defined by
\begin{equation}\label{eq:ellipse}
    x^T \Gamma x = 1\,, \quad y^T \Gamma y = r^2\,,
\end{equation}
where $\Gamma$ is a $n \times n$ positive-definite matrix and $x,y \in \mathbb{R}^n$. Then, the minimum distance between the two hyper-ellipses is given by
\[
    (1-r) \min_i{\gamma_i}\,,
\]
where $\left\{ \frac{1}{\gamma_i^2} \right\}$ is the set containing all the eigenvalues of~$\Gamma$.

\end{lem}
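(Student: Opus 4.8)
The plan is to treat this as a constrained optimization problem and solve it with Lagrange multipliers, exploiting the fact that both curves are level sets of the \emph{same} quadratic form. Throughout I would assume $0 < r < 1$, so that the second hyper-ellipse lies strictly inside the first and the claimed distance is positive. Since both hyper-ellipses are compact and the Euclidean distance is continuous, a minimizing pair $(x,y)$ exists, and I would set up the problem as minimizing $f(x,y) = |x-y|^2$ subject to the two constraints $x^T\Gamma x = 1$ and $y^T \Gamma y = r^2$.

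First I would write the stationarity conditions. Introducing multipliers $\mu,\nu$ and differentiating gives $x - y = \mu\,\Gamma x$ and $y - x = \nu\,\Gamma y$. Adding these two relations yields $\Gamma(\mu x + \nu y) = 0$, and since $\Gamma \succ 0$ is invertible this forces $\mu x + \nu y = 0$, i.e. the optimal $x$ and $y$ are collinear with the origin. This is the crux of the argument: it collapses the $2n$-dimensional search into a one-parameter family indexed by a single direction.

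Next I would exploit the collinearity. Writing $x = c\,v$ and $y = c'\,v$ for a unit vector $v$, the two constraints give $c = 1/\sqrt{v^T\Gamma v}$ and $c' = r/\sqrt{v^T\Gamma v}$; taking $x,y$ on the same ray (the opposite-ray configuration only enlarges the distance) produces the gap $|x-y| = (1-r)/\sqrt{v^T\Gamma v}$ along $v$. Minimizing this over all unit vectors $v$ is equivalent to maximizing the Rayleigh quotient $v^T\Gamma v$, whose maximum is the largest eigenvalue $\lambda_{\max}$ of $\Gamma$. Since the eigenvalues are $\{1/\gamma_i^2\}$, we have $\lambda_{\max} = 1/(\min_i\gamma_i)^2$, hence the minimum distance equals $(1-r)/\sqrt{\lambda_{\max}} = (1-r)\min_i\gamma_i$, as claimed.

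The main obstacle I anticipate lies not in the core computation but in the rigor around the degenerate cases of the stationarity conditions: I must verify that $\mu,\nu \neq 0$ (if either vanished, the relations would force $x=y$ or $x=0$ or $y=0$, all incompatible with the two distinct constraint values), and justify the sign choice that selects the same-ray configuration as the genuine minimizer. A useful complementary sanity check is the principal-axis picture: the two concentric, similar ellipsoids have their thinnest gap along the shortest semi-axis, whose length is exactly $\min_i\gamma_i$, matching the optimal $v$ aligned with the eigenvector of $\lambda_{\max}$.
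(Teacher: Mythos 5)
Your proof is correct, but it follows a genuinely different route from the paper. The paper diagonalizes $\Gamma$ (WLOG) and then runs a purely algebraic inequality chain: starting from $1-r^2=\sum_i (x_i-y_i)(x_i+y_i)/\gamma_i^2$, it applies Cauchy--Schwarz and the triangle inequality in the $\Gamma$-weighted norm to get $1-r^2 \le |x-y|\,(1+r)/\min_i\gamma_i$, i.e.\ the lower bound $|x-y|\ge (1-r)\min_i\gamma_i$ valid for \emph{every} pair of points on the two hyper-ellipsoids, and then merely observes that equality holds when $x,y$ lie on the minor semi-axes. Your variational argument instead characterizes the minimizers: existence by compactness, Lagrange stationarity forcing $\Gamma(\mu x+\nu y)=0$ and hence collinearity of $x,y$ through the origin, reduction to the one-dimensional Rayleigh-quotient problem $\max_{|v|=1} v^T\Gamma v = 1/(\min_i\gamma_i)^2$. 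What your approach buys is a rigorous proof of attainment and an explicit identification of where the minimum occurs (along the eigendirection of the largest eigenvalue, i.e.\ the shortest semi-axis), which the paper only asserts; what the paper's approach buys is elementarity (no appeal to existence of minimizers or to Lagrange multiplier theory) and a pointwise bound that holds for all admissible $x,y$, not just optimal ones. Two small points would tighten your write-up: (i) the Lagrange conditions are necessary here because the constraint gradients $(2\Gamma x, 0)$ and $(0, 2\Gamma y)$ are linearly independent (as $x,y\neq 0$ and $\Gamma$ is invertible), which is worth stating; (ii) in your degenerate-case discussion, $\mu=0$ (or $\nu=0$) forces exactly $x=y$, contradicting $1\neq r^2$ --- the alternatives ``$x=0$ or $y=0$'' you mention do not actually arise.
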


\begin{proof}
First of all, assume, without loss of generality, that~$\Gamma = diag\left( \frac{1}{\gamma_1^2}, \frac{1}{\gamma_2^2}, ..., \frac{1}{\gamma_n^2} \right)$
Indeed, since $\Gamma$ is positive definite, there always exists an orthonormal matrix~$V$ such that $\Gamma V = VD$,
with $D$ diagonal. Now, consider the quantity
\[
    1-r^2 =
    \sum_{i=1}^n \frac{x_i^2 - y_i^2}{\gamma_i^2} = 
    \sum_{i=1}^n (x_i-y_i)\frac{x_i + y_i}{\gamma_i^2} \,.
\]
By applying the Cauchy-Schwartz inequality, we find
\begin{equation*}
\begin{aligned}
    1-r^2  
    & \le
    \left[ \sum_{i=1}^n (x_i-y_i)^2 \right]^{1/2} 
    \left[ \sum_{i=1}^n \frac{(x_i + y_i)^2}{\gamma_i^4} \right]^{1/2} \\
    & \le 
    \left[ \sum_{i=1}^n (x_i-y_i)^2 \right]^{1/2} 
    \left[ \sum_{i=1}^n \frac{(x_i + y_i)^2}{\gamma_i^2} \right]^{1/2} \frac{1}{\min_i (\gamma_i)} \\
    & =  
    \frac{|x-y|}{\min_i (\gamma_i)}
    \left[ \sum_{i=1}^n \frac{(x_i + y_i)^2}{\gamma_i^2} \right]^{1/2}\,.
\end{aligned}
\end{equation*}
Then, by applying the triangular inequality to the term in square brackets
\begin{equation*}
\begin{aligned}
    1-r^2 &\le
    \frac{|x-y|}{\min_i (\gamma_i)}
    \left\{
    \left[ \sum_{i=1}^n \frac{x_i^2}{\gamma_i^2} \right]^{1/2}
    +
    \left[ \sum_{i=1}^n \frac{y_i^2}{\gamma_i^2} \right]^{1/2}
    \right\} \\
    &= \frac{|x-y|}{\min_i (\gamma_i)} (1+r)\,,
\end{aligned}
\end{equation*}
hence
\[
   |x-y| \ge (1-r) \min_i{\gamma_i}\,.
\]

Observe that the equality is attained when $x$ and $y$ are aligned with the minor semi-axes of the hyper-ellipsoids defined in~\eqref{eq:ellipse}.
%
To conclude the proof, we observe that, for a generic $A$ matrix, we can consider the distance
\[
    |V(x-y)| \le ||V|| \cdot |x-y| = |x-y|\,,
\]
With this choice, $(Vx)^T \Gamma (Vx) = x^T D x$
and
\[
    |x-y| \ge |V(x-y)| \ge (1-r) \min_i{\gamma_i}\,.
\]

\end{proof}


\begin{lem}\label{lemma2}
For $x(t), \bar{x}(t) \in \mathbb{R}^n$ for a given $\Delta>0$, assume
\[
    |x(t)-\bar{x}(t)| < \Delta\,.
\]

Consider two symmetric, positive definite matrix valued functions of time $\Gamma(t)$ and $\bar{\Gamma}(t)$ such that, $\forall~t \in [t_0\,, t_0+T]$
\[
    \bar{\Gamma}(t) = \frac{1}{r(t)^2}\Gamma(t)\,.
\]

If the inequality $\bar{x}^T(t) \bar{\Gamma}(t) \bar{x}(t) < 1$
holds on the time interval $t \in [t_0, t_0+T]$, then, on the same interval
\[
    x^T(t) \Gamma(t) x(t) < 1 \quad \forall r(t) \le 1-\frac{\Delta}{\min_i(\gamma_i(t))}\,,
\]

where $\left\{\frac{1}{\gamma_i(t)^2}\right\}$ are the eigenvalues of $\Gamma(t)$ at time $t$.

\end{lem}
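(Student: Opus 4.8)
The plan is to translate both the hypothesis and the conclusion into statements about concentric ellipsoids sharing the same shape matrix $\Gamma(t)$, and then invoke Lemma~\ref{lemma1} to bound the gap between their boundaries. First I would rewrite the hypothesis $\bar{x}^T(t)\bar{\Gamma}(t)\bar{x}(t)<1$ using $\bar{\Gamma}(t)=\frac{1}{r(t)^2}\Gamma(t)$, which is equivalent to $\bar{x}^T(t)\Gamma(t)\bar{x}(t)<r(t)^2$; that is, at each $t$ the averaged state $\bar{x}(t)$ lies strictly inside the inner ellipsoid $E_r(t)=\{y:y^T\Gamma(t)y\le r(t)^2\}$. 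The target $x^T(t)\Gamma(t)x(t)<1$ is precisely the requirement that $x(t)$ lie strictly inside the outer ellipsoid $E_1(t)=\{z:z^T\Gamma(t)z\le 1\}$. Since $r(t)\le 1$, the two ellipsoids are nested, $E_r(t)\subseteq E_1(t)$, and they are exactly the surfaces whose minimum separation is computed in Lemma~\ref{lemma1}.

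The core of the argument is a contradiction exploiting this geometry, carried out pointwise in $t$ (so I suppress the time argument). Suppose $x$ failed the conclusion, i.e. $x^T\Gamma x\ge 1$, so that $x\notin \mathrm{int}\,E_1$ while $\bar{x}\in\mathrm{int}\,E_r$. Because an ellipsoid is convex and $\bar{x}$ is an interior point of $E_1$ whereas $x$ is not, the straight segment $[\bar{x},x]$ crosses the outer boundary $\partial E_1$ at some point $y_2$; by the nesting $E_r\subseteq E_1$ it must first cross the inner boundary $\partial E_r$ at a point $y_1$ lying between $\bar{x}$ and $y_2$ on the segment. Consequently $|x-\bar{x}|\ge|y_2-y_1|$, and since $y_1\in\partial E_r$ and $y_2\in\partial E_1$, the distance $|y_2-y_1|$ is at least the minimum distance between the two boundaries, which by Lemma~\ref{lemma1} equals $(1-r)\min_i(\gamma_i)$.

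Finally I would close the loop with the hypothesis on $r$: the assumption $r\le 1-\frac{\Delta}{\min_i(\gamma_i)}$ rearranges to $(1-r)\min_i(\gamma_i)\ge\Delta$, so the chain above yields $|x-\bar{x}|\ge(1-r)\min_i(\gamma_i)\ge\Delta$, contradicting the standing bound $|x-\bar{x}|<\Delta$. Hence $x^T\Gamma x<1$ at every $t\in[t_0,t_0+T]$, as claimed.

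I expect the main obstacle to be the geometric crossing step rather than any algebra: one must argue cleanly that the segment from the interior point $\bar{x}$ to the exterior point $x$ meets $\partial E_r$ strictly before $\partial E_1$, so that the sub-segment $[y_1,y_2]$ is genuinely contained in $[\bar{x},x]$ and its length lower-bounds $|x-\bar{x}|$. This rests on convexity of the ellipsoids and on the nesting $E_r\subseteq E_1$ (valid because $r\le 1$), together with the observation that Lemma~\ref{lemma1} bounds the distance between \emph{any} pair of points on the two boundaries, not merely the optimal pair. A minor point worth stating explicitly is that the conclusion holds uniformly on the interval because the inequality $(1-r(t))\min_i(\gamma_i(t))\ge\Delta$ is imposed for every $t\in[t_0,t_0+T]$.
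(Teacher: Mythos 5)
Your proposal is correct and follows essentially the same route as the paper, whose proof consists of the single remark that the result is ``obtained immediately by applying Lemma~\ref{lemma1} at each time instant $t$.'' Your segment-crossing contradiction (interior point of the inner ellipsoid, exterior point of the outer one, boundaries separated by $(1-r(t))\min_i\gamma_i(t)\ge\Delta$) is precisely the pointwise-in-$t$ application of Lemma~\ref{lemma1} that the paper leaves implicit, just written out in full.
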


\begin{proof}

The result is obtained immediately by applying Lemma~\ref{lemma1} at each time instant $t$.

\end{proof}

\begin{rem} For well-posedness, it must hold that $\Delta < \min_{i\,,t} \left\{\gamma_i(t) \right\} \forall t\in[t_0, t_0+T]$.
\end{rem}


Using Lemma~\ref{lemma2}, we can now state the following result.

\begin{thm}\label{thm1}
Consider the LTV system~\eqref{eq:LTVcase} and its averaged version~\eqref{eq:avgLTV}. Suppose that the dithering/mixing frequency $\omega$ is chosen so that $|x(t)-\bar{x}(t)|<\Delta$ for $t \in [t_0\,, t_0+T]$.

If the following DLMI condition is satisfied for some~$Q(t)\,,k\,,\alpha$ and~$\Pi(t)$.

\begin{equation*}
\left\{
\begin{aligned}
  -&\dot{Q}(t) + Q(t)A(t) + A^T(t) Q(t) - k\alpha Q(t) \Pi(t) B(t) B^T(t) \\
&\begin{aligned}
  - k\alpha B(t) B^T(t) \Pi(t) Q(t) &\prec 0  &\forall~t \in [t_0\,, t_0+nT] \\
  Q(t) &\prec  \bar{\Gamma}^{-1}(t) &\forall~t \in [t_0\,, t_0+nT] \\
  Q(t_0) &\succ R^{-1} 
\end{aligned}  
\end{aligned}
\right.
\end{equation*}

where $\bar{\Gamma}(t)$ is the matrix-valued function of time defined in Lemma~\ref{lemma2}
%
%
with
\[
   r(t) \le 1-\frac{\Delta}{\min_i(\gamma_i(t))}\,, 
\]
then the closed-loop system~\eqref{eq:LTVcase} is FTS with respect to~$\left(t_0\,,T\,,R\,,\Gamma(t)\right)$ for the same values of~$k\,,\alpha\,,$ and~$\Pi(t)$.

\end{thm}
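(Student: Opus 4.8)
The plan is to connect the three ingredients already assembled: the DLMI hypothesis, the finite-time stabilization characterization from~\cite[Thm.~3.1]{FTSbook} recalled in~\eqref{eq:FTScond}, and Lemma~\ref{lemma2}. First I would observe that the DLMI in the statement is precisely condition~\eqref{eq:FTScond} applied to the averaged system~\eqref{eq:avgLTV} with the closed-loop dynamics matrix $A(t) - k\alpha B(t)B^T(t)\Pi(t)$, relative to the tightened matrix-valued function $\bar{\Gamma}(t)$ and the same $R$. Indeed, substituting this effective system matrix into~\eqref{eq:FTScond} and matching the symmetric terms reproduces the first inequality of the hypothesis, while the two algebraic conditions $Q(t) \prec \bar{\Gamma}^{-1}(t)$ and $Q(t_0) \succ R^{-1}$ are carried over verbatim. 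I would make explicit that no separate control gain $L(t)$ appears here because the stabilizing feedback is already baked into the extremum-seeking control law~\eqref{eq:ESinput}; the averaging has fixed the effective gain to $-k\alpha B^T(t)\Pi(t)$, so the role of $L(t)$ is played by this predetermined expression rather than a free decision variable.

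Having identified the correspondence, I would invoke~\cite[Thm.~3.1]{FTSbook} to conclude that satisfaction of the DLMI guarantees the averaged system~\eqref{eq:avgLTV} is FTS with respect to $\left(t_0\,, T\,, \bar{\Gamma}(\cdot)\,, R\right)$, i.e.\ $\bar{x}_0^T R \bar{x}_0 \le 1 \implies \bar{x}^T(t)\bar{\Gamma}(t)\bar{x}(t) < 1$ for all $t \in [t_0\,, t_0+T]$. Since the averaged trajectory shares the same initial condition as the original one, $\bar{x}(t_0) = x_0$ satisfies $x_0^T R x_0 \le 1$, so the FTS conclusion holds for the averaged state along the whole interval. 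This is the step where the tightened region $\bar{\Gamma}(t) = \tfrac{1}{r(t)^2}\Gamma(t)$ does its work: the averaged dynamics are confined to the smaller ellipsoid, leaving a margin of width at least $\Delta$ between its boundary and that of the target ellipsoid $x^T\Gamma x = 1$, by the geometric estimate of Lemma~\ref{lemma1}.

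Finally I would close the gap between the averaged and the true trajectory. By hypothesis the frequency $\omega$ has been chosen large enough that $|x(t) - \bar{x}(t)| < \Delta$ on $[t_0\,, t_0+T]$, which is exactly the converging-trajectories estimate of Section~\ref{sec:convtraj} with $\varepsilon = 1/\omega$. With $\bar{x}^T(t)\bar{\Gamma}(t)\bar{x}(t) < 1$ established and the radius choice $r(t) \le 1 - \tfrac{\Delta}{\min_i(\gamma_i(t))}$ in force, Lemma~\ref{lemma2} applies pointwise in $t$ and yields $x^T(t)\Gamma(t)x(t) < 1$ for all $t \in [t_0\,, t_0+T]$. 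Together with $x_0^T R x_0 \le 1$ this is precisely the definition of FTS of the closed-loop system~\eqref{eq:LTVcase} with respect to $\left(t_0\,, T\,, R\,, \Gamma(t)\right)$, completing the argument.

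I expect the main obstacle to be the rigorous handling of the averaging approximation rather than the algebraic matching. The clean statement presumes a single $\Delta$ valid uniformly on $[t_0\,, t_0+T]$, whereas the convergence-of-trajectories results quoted earlier give a bound $\Delta(nT_u\,, \varepsilon)$ that depends on the horizon through the number of periods; one must argue that the compact set $K$ and finite horizon make this bound finite and that $\omega^*$ can indeed be selected so the inequality holds throughout, not merely on average or asymptotically. A careful proof should also confirm that the state remains in a compact set over $[t_0\,, t_0+T]$ so that the converging-trajectories hypothesis is legitimately invoked, and should note the well-posedness proviso $\Delta < \min_{i,t}\gamma_i(t)$ from the Remark, which ensures $r(t)$ stays positive and $\bar{\Gamma}(t)$ remains a bona fide tightening of $\Gamma(t)$.
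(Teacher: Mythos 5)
Your proposal is correct and follows essentially the same route as the paper's own (much terser) proof: identify the averaged closed loop with the feedback gain $K(t)=-k\alpha B^T(t)\Pi(t)$, recognize the DLMI as condition~\eqref{eq:FTScond} for the averaged system with $\Gamma$ replaced by $\bar{\Gamma}$ (with $L(t)=K(t)Q(t)$), and then transfer FTS from $\bar{x}$ to $x$ via the hypothesis $|x(t)-\bar{x}(t)|<\Delta$ and the geometric Lemma~\ref{lemma2}. Your write-up actually fills in details the paper leaves implicit — notably the explicit invocation of Lemma~\ref{lemma2} (the paper's proof cites Lemma~\ref{lemma1} where the pointwise-in-time version is what is really used) and the well-posedness caveats — so no gaps to report.
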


\begin{proof}
Comparing~\eqref{eq:systemABCD2} to~\eqref{eq:avgLTV}, we choose
\begin{equation}\label{eq:controlgain2}
    K(t) = -k\alpha B^T(t)\Pi(t)\,.
\end{equation}

For Lemma~\ref{lemma1}, the FTS of the averaged system~\eqref{eq:avgLTV} with respect to~$\left(t_0\,,T\,,R\,,\bar{\Gamma}(t)\right)$ implies the FTS of the closed loop system~\eqref{eq:LTVcase} with respect to $\left(t_0\,,T\,,R\,,\Gamma(t)\right)$.

Condition~\eqref{eq:ES-FTS} is immediately obtained combining~\eqref{eq:FTScond}, \eqref{eq:controlgain}, \eqref{eq:controlgain2} and substituting $\Gamma(t)$ with $\bar{\Gamma}(t)$ in the FTS problem formulation for the averaged system.
\end{proof}

\begin{rem} Note that~$\Delta$ must be small enough so that the well-posedness condition $\bar{\Gamma}(t_0)<R$ is still satisfied. 
\end{rem}

We have obtained the following DLMI problem for the averaged system
\begin{equation}\label{eq:ES-FTS}
\left\{
\begin{aligned}
  -&\dot{Q}(t) + Q(t)A(t) + A^T(t) Q(t) -
  k\alpha Q(t) \Pi(t) B(t) B^T(t) \\
  &
  \begin{aligned}
  - k\alpha B(t) B^T(t) \Pi(t) Q(t)] 
  &\prec 0 &\forall~t \in [t_0\,, t_0+nT]\\
  Q(t) &\prec  \bar{\Gamma}^{-1}(t) &\forall~t \in [t_0\,, t_0+nT] 
    \\
  Q(t_0) &\succ R^{-1} 
    \end{aligned} 
\end{aligned}
\right.
\end{equation}

However, this problem is still nonlinear, as it contains the product of the design parameters $k\,,\alpha\,,\Pi(t)$ and~$Q(t)$. 
%
To solve it, observe~\cite{scheinker:2013} 
that the term $k\alpha BB^T\Pi \bar{x}$ 
in~\eqref{eq:avgLTV} is proportional to the gradient of the Lyapunov-like function $V(t\,,x) = x^T\Pi(t) x$, evaluated for $x = \bar{x}$. This term is weighted by the positive semi-definite matrix~$B(t)B(t)^T$. If the product $k\alpha$ is large enough, under a condition of persistency of excitation of $B(t)$, this gradient term dominates the $A(t)\bar{x}$ term, and the trajectory of the averaged system evolves according to a gradient descent of $V(t\,,\bar{x})$.
%
%
According to the definition of FTS, we want to keep the quantity $x(t)^T\Gamma(t) x(t)$ below $1$, therefore it makes sense to choose $\Pi(t) = \Gamma(t)$. This choice will also turn useful in the calculations of Section~\ref{sec:bounds}. We can then perform a scan in the product $k\alpha$ in order to find a solution in terms of $Q(t)$. 
It is worth remarking that the proposed technique gives no particular prescription on how to tune these parameters, as the averaged system dynamics only depends on their product. However, their choice can influence the stability properties of the original system, the amplitude of the oscillations and the capability of the algorithm of escaping local minima of $V(x)$. 
For a discussion on the choice of~$k$ and $\alpha$, see~\cite[Sec.~1.3]{krstic:book}.

\section{Practical choice of the dithering frequency}\label{sec:bounds}

As mentioned in the previous sections, the original and averaged system exhibit so-called \emph{converging trajectories}. In particular, it can be shown that given a distance $\Delta$, it is always possible to find a minimum frequency $\omega^*$ such that the distance $|x(t) - \bar{x}(t)|$ is smaller than $\Delta$ for all $\omega > \omega^*$. This means that, once the dithering frequency has been chosen such that the condition $\omega > \omega^*$ is satisfied and the control matrix-valued function $\Pi(t)$ has been fixed, Theorem~\ref{thm1} can be applied to find the values of the design parameters $k$ and $\alpha$ that guarantee the FTS of a system in the form~\eqref{eq:LTVcase} by means of the equivalent FTS problem formulated in terms of its autonomous Lie-bracket averaged counterpart~\eqref{eq:avgLTV}. 
%
We now turn our attention to the problem of finding an estimate of the minimum  dithering frequency needed for this modified extremum-seeking algorithm.


For simplicity we will consider the case where the $B(t)$ matrix is a constant of unknown sign, say $B(t)=B$. Moreover, let us fix $\Pi(t) = \Gamma(t)$ and assume $x(t_0) = \bar{x}(t_0) = x_0$ (note that the dithering signal can always be chosen so as to be $0$ at $t=t_0$). 

Direct integration of~\eqref{eq:LTVcase} gives
\begin{equation*}
\begin{aligned}
    x(t) & =  x_0 + \int_{t_0}^t A(\tau)x(\tau)d\tau
            + \frac{\alpha}{\sqrt{\omega}}B 
            \left[ 
                \sin(\omega \tau)
            \right]_{t_0}^t \\
        &- \int_{t_0}^t Bk\sqrt{\omega} \sin(\omega \tau) 
            \left( 
                x^T(\tau) \Gamma(\tau) x(\tau) 
            \right) d\tau\,,
\end{aligned}
\end{equation*}

Integrating by parts the last term, using again~\eqref{eq:LTVcase}, the fact that $x^T(t)\Gamma(t)B = B^T \Gamma(t) x(t)$ (it is scalar) and applying standard trigonometric identities we have (time dependencies are dropped for clarity)
\begin{equation}\label{eq:boundexact}
\begin{aligned}
    x(t) &=  x_0 + \int_{t_0}^t 
            \left[
                A - k \alpha B B^T \Gamma 
            \right]x d\tau \\
            &+ \frac{\alpha}{\sqrt{\omega}}B 
            \left[ 
                \sin(\omega \tau)
            \right]_{t_0}^t
            + \frac{k}{\sqrt{\omega}}B 
            \left[ 
                \cos(\omega \tau) x^T \Gamma x
            \right]_{t_0}^t  \\
            &- \int_{t_0}^t \frac{k}{\sqrt{\omega}}B \cos(\omega \tau) 
            \left( 
                x^T \dot{\Gamma} x
            \right)d\tau    \\ 
            &- \int_{t_0}^t \frac{2k}{\sqrt{\omega}}B \cos(\omega \tau) 
            \left( 
                x^T \Gamma A x 
            \right)d\tau  \\
            &- \int_{t_0}^t k\alpha \cos(2\omega \tau) 
            B B^T \Gamma x
            d\tau    \\
            &- \int_{t_0}^t k^2 \sin(2\omega \tau) 
            B B^T \Gamma x
            \left(
                x^T \Gamma x
            \right) d\tau\,.
\end{aligned}
\end{equation}

This expression is \emph{exact}. In particular, one possibility to find a lower bound on the dithering frequency $\omega$ would be to integrate by parts the terms depending on $2\omega\tau$ that appear on the last row of~\eqref{eq:boundexact}
\begin{equation*}\label{eq:partintegrals}
\begin{aligned}
    \int_{t_0}^t &k\alpha \cos(2\omega \tau) 
            B B^T \Gamma x
            d\tau \\
    & =  \left[ 
        \frac{k \alpha}{2\omega} B B^T \Gamma x \sin(2\omega \tau)
    \right]_{t_0}^t \\
    &- \int_{t_0}^t \frac{k \alpha}{2\omega} B B^T \sin(2\omega \tau)
        \left[
            \dot{\Gamma}x + \Gamma \dot{x}
        \right] d\tau  \\
    \int_{t_0}^t &k^2 \sin(2\omega \tau) 
            B B^T \Gamma x
            \left(
                x^T \Gamma x
            \right) d\tau \\
    & = \left[ 
        - \frac{k^2}{2\omega} B B^T \Gamma x \left(x^T \Gamma x\right) \cos(2\omega \tau)
    \right]_{t_0}^t     \\
    & + \int_{t_0}^t \frac{k^2}{2\omega} B B^T \cos(2\omega \tau)
            \left( \dot{\Gamma}x + \Gamma \dot{x} \right)
            \left( x^T\Gamma x \right) d\tau \\
     & + \int_{t_0}^t \frac{k^2}{2\omega} B B^T \cos(2\omega \tau)
            \Gamma x 
            \left( x^T\dot{\Gamma} x + 2x^T \Gamma \dot{x} \right)
         d\tau\,,
\end{aligned}
\end{equation*}
to obtain, along the lines of ~\cite[Thm.~1]{durr:2013}, an expression in the form
\begin{equation*}
    x-\bar{x} =\int_{t_0}^t 
            \left[
                A - k \alpha B B^T \Gamma 
            \right] (x - \bar{x}) d\tau
        + \sum_i R_i\,,
\end{equation*}
where each remainder term $R_i$ satisfies
\begin{equation*}
    |R_i| \le \frac{c_i}{\sqrt{\omega}}\,,
\end{equation*}
for some constant $c_i$ independent of $t_0$ and $x_0$ and for $\omega$ large enough, under some (reasonable) assumptions. 
Then, the Gronwall-Bellman lemma can be applied to obtain an upper bound on the distance between the actual and averaged trajectories, which can be made arbitrarily small by increasing $\omega$.
%
However, the need for several partial integrations leads to cumbersome calculations, and to a result which is not readily interpretable. Moreover, the exploitation of the Gronwall-Bellman lemma easily leads to very conservative estimates.
Hence, we propose to exploit the intrinsic \emph{time-scale separation} property of the algorithm in order to draw an approximate expression for $x(t)-\bar{x}(t)$. This leads us to invoke the following approximation.

\begin{app}\label{app1}
    Exploit the \emph{time-scale separation} property of ES, and assume that the oscillations of the dithering and mixing terms vary on a much faster scale than the other terms appearing in the integrals of~\eqref{eq:boundexact}.
\end{app}

The whole ES method is based on the implicit assumption that all the terms in the right-hand side of~\eqref{eq:boundexact} but the ones related to the average dyamics, i.e.
$
    x_0 + \int_{t_0}^t [A-k\alpha B B^T \Gamma] x d\tau\,,
$
vanish for $\omega \rightarrow \infty$. Hence, a "safe" approximation is to assume everywhere that, for a generic function of time $f(t)$
\begin{equation*}
\begin{aligned}
    \int_{t_0}^t &f(\tau) \sin(\omega \tau) d\tau 
    = \\
    &\left[\frac{f(\tau)}{\omega}\cos(\omega \tau)\right]_{t_0}^t
    +
    \int_{t_0}^t \frac{\dot{f}(\tau)}{\omega} \cos(\omega \tau) d\tau \\
    &\cong
    \left[\frac{f(\tau)}{\omega}\cos(\omega \tau)\right]_{t_0}^t\,,
\end{aligned}
\end{equation*}
i.e. $\dot{f}(t) << \omega$. This leads to approximate~\eqref{eq:boundexact} as:
\begin{equation}\label{eq:bound3}
    \begin{aligned}
    x(t) &\cong  x_0 + \int_{t_0}^t 
            \left[
                A - k \alpha B B^T \Gamma 
            \right]x d\tau \\
            & + \frac{\alpha}{\sqrt{\omega}}B 
            \left[ 
                \sin(\omega \tau)
            \right]_{t_0}^t 
            + \frac{k}{\sqrt{\omega}}B 
            \left[ 
                \cos(\omega \tau) x^T \Gamma x
            \right]_{t_0}^t  \\
            &- \frac{k}{\omega \sqrt{\omega}}B  
            \left[ 
                x^T \dot{\Gamma} x \sin(\omega \tau)
            \right]_{t_0}^t  \\
            &+ \frac{2k}{\omega\sqrt{\omega}}B 
            \left[
                x^T \Gamma A x \sin(\omega \tau)
            \right]_{t_0}^t \\
             &- \frac{k\alpha}{2\omega} B B^T 
            \left[
                \Gamma x \sin(2\omega \tau) 
            \right]_{t_0}^t \\
            &+ \frac{k^2}{2\omega} B B^T 
            \left[
            \Gamma x
            \left(
                x^T \Gamma x
            \right) \cos(2\omega \tau)
            \right]_{t_0}^t\,.
    \end{aligned}
\end{equation}

\begin{app}\label{app2}
    Since we are looking for a relatively large dithering frequency, we neglect the highest order terms in~$1/\sqrt{\omega}$ (i.e., those with $\omega\sqrt{\omega}$ at the denominator).
\end{app}

This leads to
\begin{equation}\label{eq:bound4}
    \begin{aligned}
    x(t) &\cong  x_0 + \int_{t_0}^t 
            \left[
                A - k \alpha B B^T \Gamma 
            \right]x d\tau \\
            & + \frac{\alpha}{\sqrt{\omega}}B 
            \left[ 
                \sin(\omega \tau)
            \right]_{t_0}^t 
            + \frac{k}{\sqrt{\omega}}B 
            \left[ 
                \cos(\omega \tau) x^T \Gamma x
            \right]_{t_0}^t  \\
             &- \frac{k\alpha}{2\omega} B B^T 
            \left[
                \Gamma x \sin(2\omega \tau) 
            \right]_{t_0}^t \\
            &+ \frac{k^2}{2\omega} B B^T 
            \left[ \Gamma x
            \left(
                x^T \Gamma x
            \right) \cos(2\omega \tau)
            \right]_{t_0}^t\,.
    \end{aligned}
\end{equation}

\begin{app}\label{app3}
    Assume $x^T \Gamma(t) x < 1$.
\end{app}

If $\omega$ is large enough, $|x(t)-\bar{x}(t)|<\Delta$ and the assumptions of Thm.~\ref{thm1} are satisfied. In turn, this implies that the FTS condition is satisfied for the controlled system, and thus approx.~\ref{app3} holds. 
(see also the similar argument used in~\cite[p.1550]{scheinker:2013}).

\begin{rem}
Intuitively, if $|B(t)|,k,\alpha$ are of order $\approx 1$ or below, approx.~\ref{app2}-\ref{app3} reduce to $\omega^{3/2}>>||\dot{\Gamma}(t)||,||\Gamma(t)A(t)||$.
\end{rem}

This allows to obtain the following (approximate) inequality
\begin{equation}\label{eq:bound5}
\begin{aligned}
    x(t) &\le  x_0 + \int_{t_0}^t 
            \left[
                A - k \alpha B B^T \Gamma 
            \right]x d\tau
    \\
    & + \frac{2 (\alpha + k)}{\sqrt{\omega}} |B|
    + \frac{k (\alpha+k)}{\omega}|B|^2 
    \max_t \left\{
        \frac{\bar{\sigma}(\Gamma(t))}{\sqrt{\underline{\sigma}(\Gamma(t))}} 
    \right\}\,,
\end{aligned}
\end{equation}
where $\bar{\sigma}(\Gamma(t))$ is the maximum eigenvalue of $\Gamma(t)$, $\underline{\sigma}(\Gamma(t))$ is its minimum eigenvalue and we used the fact (given without demonstration) that
\[
    x^T \Gamma x < 1 \implies |\Gamma x| \le \frac{\bar{\sigma}(\Gamma(t))}{\sqrt{\underline{\sigma}(\Gamma(t))}}\,.
\]

Let us define $\kappa = \max_t \left\{
        \frac{\bar{\sigma}(\Gamma(t))}{\sqrt{\underline{\sigma}(\Gamma(t))}} \right\}$ for brevity (note that $\kappa$ is a measure of the hyper-ellipsoid elongation).
%
%
%
By applying the Gronwall-Bellman lemma, we obtain
\begin{equation}\label{boundapprox}
    |x(t) - \bar{x}| \le \left\{
     \frac{2 (\alpha + k)}{\sqrt{\omega}} |B|
    + \frac{k (\alpha+k)}{\omega}|B|^2 
    \kappa
    \right\} \eta \,,
\end{equation}
where we defined
$\eta = \max_t \left| \left|
    e^{\int_{t_0}^t 
    \left[
        A(\tau) - k \alpha B B^T \Gamma(\tau) 
    \right] d\tau}
    \right| \right| $.

Then, from the desired condition $|x(t) - \bar{x}| \le \Delta$ we get the following inequality in terms of $1/\sqrt{\omega}$
\begin{equation}\label{eq:wstar}
    %
     \frac{2}{\sqrt{\omega}}
    + \frac{k \kappa |B|}{\omega}
    %
    \le \frac{\Delta}{(\alpha + k) \eta |B|}\,.
\end{equation}

Solving~\eqref{eq:wstar} for $\omega$ provides an indication of the minimum dithering frequency needed by the algorithm.

\begin{rem} In approximation~\ref{app2} we neglected the terms proportional to $\frac{1}{\omega\sqrt{\omega}}$. If $\omega$ is large enough so that also the terms $\propto \frac{1}{\omega}$ are negligible in~\eqref{eq:bound4}, expression~\eqref{eq:wstar} admits a neat interpretation. Indeed, it can be rewritten as
\begin{equation}\label{eq:1storder}
     \omega \ge 
     \left[
        \frac{2(\alpha + k) |B| \eta}{\Delta}
    \right]^2\,,
\end{equation}

i.e., the square-root of the minimum dithering frequency is inversely proportional to the required maximum distance, and is directly proportional to the terms that influence the amplitude of the perturbation injected into the system ($\alpha, k, |B|$). Moreover, a larger $\omega$ is needed if the system exhibits growing modes which tends to amplify an initial perturbation, whose behaviour is concisely captured by $\eta$. 
\end{rem}

\section{Examples}\label{sec:examples}

In this section we consider two numerical examples to show the effectiveness of the proposed approach for finite-time stabilization via extremum seeking.

\begin{example}\label{ex:Ex1}
Let us consider the following second order LTI system
\begin{equation*}\label{eq:sysex1}
\begin{aligned}
    \dot{x}(t) &= Ax+Bu \\
            &=
        \begin{bmatrix}
         0   & 0.01 \\
        -0.1 & 0.15 
        \end{bmatrix} x(t) + 
        \begin{bmatrix}
         0  \\
         1  
        \end{bmatrix} u(t)
\end{aligned}
\end{equation*}
Where the input $u(t)$ is chosen as in~\eqref{eq:ESinput}. We search for the values of the control parameters $k, \alpha, \omega$ which make~\eqref{eq:sysex1} FTS with respect to
\begin{itemize}
    \item $R = \mathbf{I_2} / 0.4$
    \item $\Gamma(t) = \Gamma = \mathbf{I_2} / 0.5$
    \item $t_0 = 0$
    \item $T = 10$
\end{itemize}
\noindent
where $\mathbf{I_n}$ is the identity matrix of order $n$. The maximum allowed distance between $x(t)$ and $\bar{x}(t)$ has been set to~$\Delta = 0.09$ and we have chosen $\Pi = \Gamma$.

\end{example}

The associated DLMI problem~\eqref{eq:FTScond} has been discretized with a time step of $T_s = 0.1$~s, with $Q(t)$ assumed to be piecewise-linear, solved in Matlab using the YALMIP~\cite{yalmip} parser and the MOSEK~\cite{mosek} solver. 
To solve the problem, which is non-linear, a scan in the product $k\alpha$ was performed (starting at $k\alpha = 0$ with a step of $0.01$) to find the minimum value of $k\alpha$ which makes the problem feasible in $Q(t)$. The variables $k$ and $\alpha$ were assumed to be constant and equal. 

For this problem, we obtained the solution $k\alpha = 0.04$, with the resulting $\omega_{\min} \cong 750$~rad/s obtained from~\eqref{eq:wstar}. For comparison, condition~\eqref{eq:1storder} gives a very similar value of $\omega_{\min} \cong 739$~rad/s. 
The fact that the first and second order approximated conditions~\eqref{eq:1storder}-\eqref{eq:wstar} yield a very similar value for $\omega_{\min}$ suggests that the error introduced by approximations~\ref{app1}-\ref{app2} is negligible (note that, in this case, $\dot{\Gamma}(t) = 0$).

Figs.~\ref{fig:ex1-1} and~\ref{fig:ex1-4} show the obtained results for $k = \alpha, \omega = \omega_{\min}$ and five different random choices of the initial state, all such that $x_0^T R x_0 > 0.8$. 
In all the considered cases, the distance between the closed-loop and the averaged dynamics is well below the chosen threshold $\Delta$.

Finally, it is worth remarking again here that this approach still works even when the control direction is reversed, making it appealing for systems with unknown control direction.
\begin{figure}
 \centering
 \includegraphics[width=0.7\linewidth, trim=20 20 20 0]{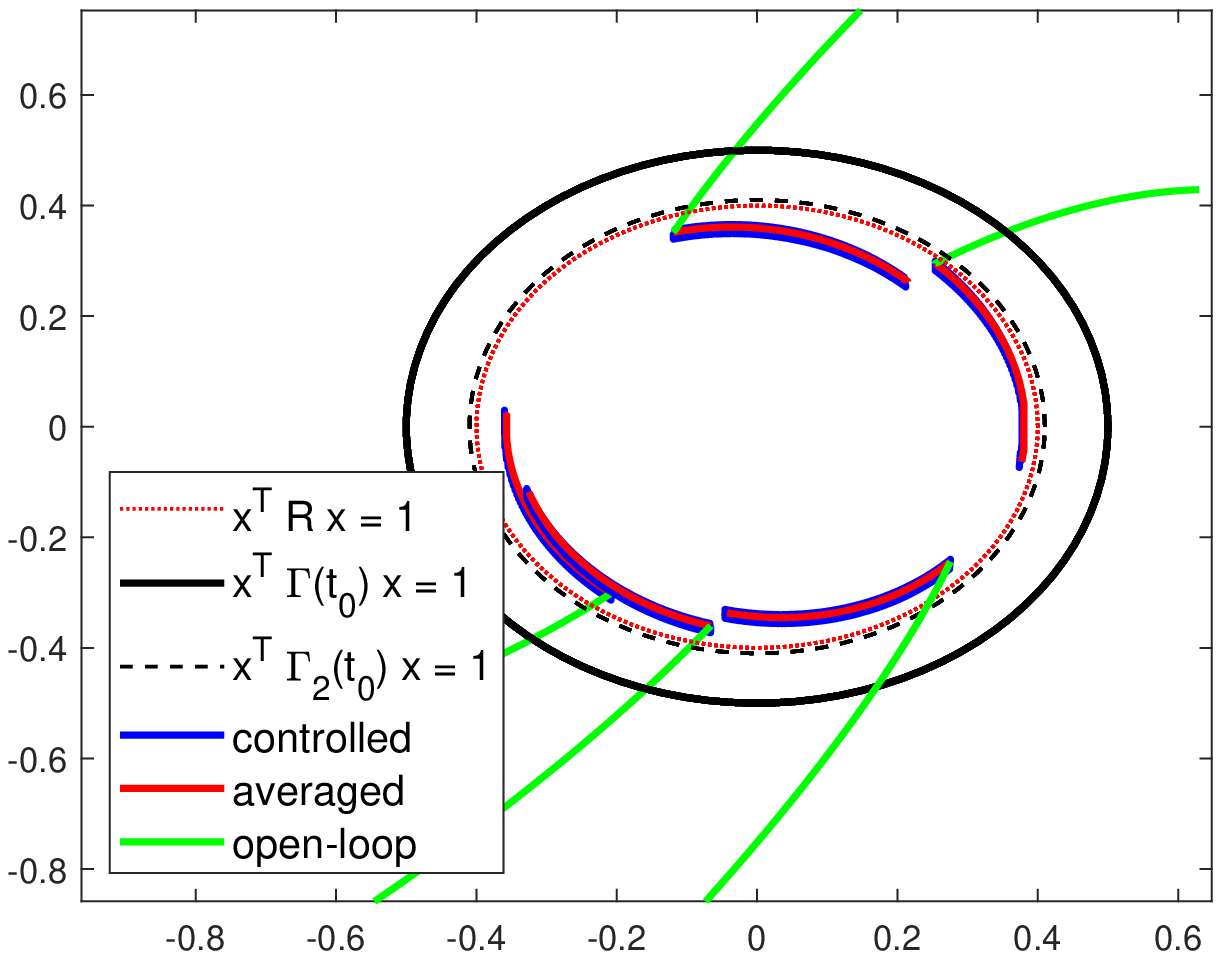}
 \caption{State trajectory of the open-loop, controlled and averaged system for $x_0 = [0.25 \quad 0.25]^T$. It can be seen that the open-loop trajectory (green line) is not FTS wrt the chosen $\Gamma\,, R\,, T\,,$ and~$t_0$. The solid black and dashed black traces represent the ellipses associated to $\Gamma$ and $\tilde{\Gamma}$, while the dotted red circle represent the ellipse defined by $R$.}
 \label{fig:ex1-1}
\end{figure}
\begin{figure}
 \centering
 \includegraphics[width=0.8\linewidth]{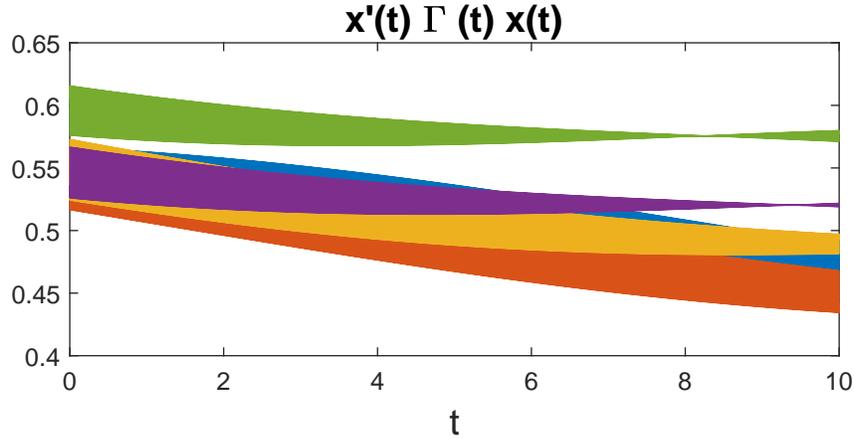}
 \caption{The product $x(t)^T,\Gamma(t)x(t)$ for different choices of the initial state.}
 \label{fig:ex1-4}
\end{figure}

\begin{example}\label{ex:Ex2} 

In this example, we consider again the FTS problem of Example~\ref{ex:Ex1}, but this time the B matrix is given by:
\[
    B(t) = 
    \begin{bmatrix}
     0  \\
     \cos(\frac{2\pi}{T}t)
    \end{bmatrix}\,.
\]
\end{example}

The B matrix is time-varying, with a loss of controllability at $t = 2.5$ and at $t = 7.5$ where $B(t) = [0 \quad 0]^T$. Problem~\eqref{eq:ES-FTS} was solved using the MOSEK~\cite{mosek} solver discretizing the DLMI condition with a sampling time $T_s = 0.01$~s. The problem admits a solution for $k\alpha = 0.11$.

Although an explicit bound in the case of time-varying $B(t)$ was not derived in §\ref{sec:bounds}, if $B(t)$ varies on time-scales which are slower than those of the dithering/mixing signals, if approx.~\ref{app1} holds for $|\dot{B}(t)|$ we expect~\eqref{eq:wstar} to still provide a good approximation for $\omega_{\min}$. 
For this example, the value obtained by~\eqref{eq:wstar} is $\omega_{\min} \cong 1931$ rad/s, and again~\eqref{eq:1storder} provides a very close value of about $1902$ rad/s.

Figs.~\ref{fig:ex2-1} and~\ref{fig:ex2-4} show the obtained results for $k = \alpha, \omega = \omega_{\min}$ for one random choice of the initial state.
\begin{figure}
 \centering
 \includegraphics[width=0.65\linewidth]{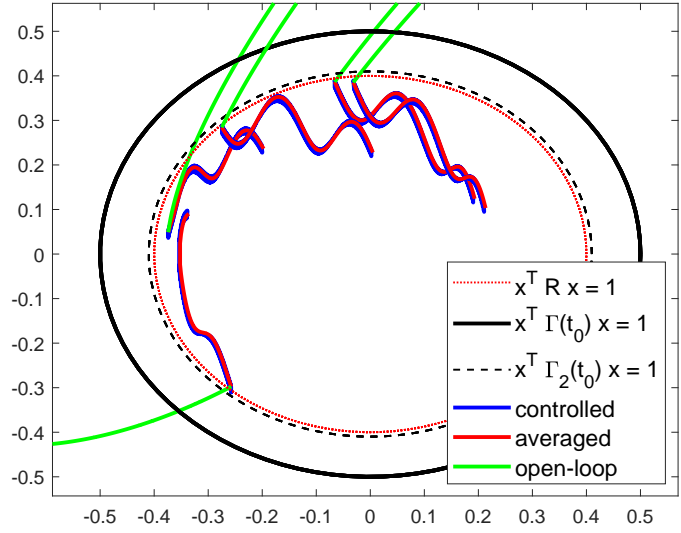}
 \caption{State trajectory of the open-loop, controlled and averaged system. It can be seen that the open-loop trajectory (green line) is not FTS wrt the chosen $\Gamma\,, R\,, T\,,$ and~$t_0$. The solid black and dashed black traces represent the ellipses associated to $\Gamma$ and $\tilde{\Gamma}$, while the dotted red circle represent the ellipse defined by $R$.}
 \label{fig:ex2-1}
\end{figure}
\begin{figure}
 \centering
 \includegraphics[width=0.8\linewidth]{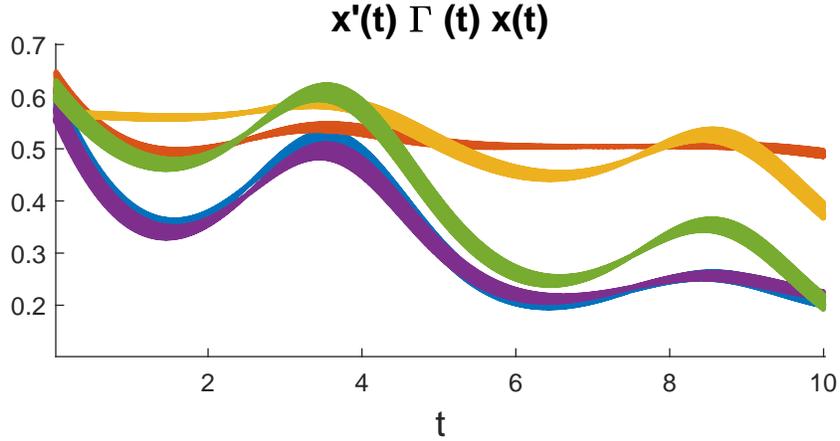}
 \caption{The product $x(t)^T,\Gamma(t)x(t)$ for five random choice of the initial state.}
 \label{fig:ex2-4}
\end{figure}

\begin{example}\label{ex:Ex3}
Consider the LTV system
\begin{equation*}
\begin{aligned}
    \dot{x}(t) &= A(t)x+Bu \\
            &= \left(1+t/10\right)
        \begin{bmatrix}
         0.5 & -0.1 \\
         0   & -0.15 
        \end{bmatrix}  x(t) + 
        \begin{bmatrix}
         1  \\
         0  
        \end{bmatrix} u(t)
\end{aligned}
\end{equation*}
where the input $u(t)$ is chosen as in~\eqref{eq:ESinput}. We search for the values of the control parameters $k, \alpha, \omega$ which finite-time stabilize~\eqref{eq:sysex1} with respect to
\begin{itemize}
    \item $R = \begin{bmatrix}
                    6.25 & 0 \\
                    0    & 9.375 
                \end{bmatrix}$
    \item $\Gamma(t) = \Gamma_0 \left( e^{t/10}\right)$ 
            with $\Gamma_0 =\begin{bmatrix}
                    4 & 0 \\
                    0 & 6 
                \end{bmatrix}$
    \item $t_0 = 0$
    \item $T = 5$
\end{itemize}
\noindent
The maximum allowed distance between $x(t)$ and $\bar{x}(t)$ has been set to~$\Delta = 0.0735$ and we have chosen $\Pi(t) = \Gamma(t)$.

\end{example}

To solve the resulting DLMI, the time interval $[t_0, t_0 + T]$ was discretized in 300 sub-intervals; $Q(t)$ was again assumed to be piecewise-linear.

For this problem, we obtained the solution $k\alpha = 0.14$, $\omega_{\min} \cong 1714$~rad/s obtained from~\eqref{eq:wstar}. Condition~\eqref{eq:1storder} yields a very similar value of $\omega_{\min} \cong 1656$~rad/s. It can be verified that approx.~\ref{app1}-\ref{app2} are well satisfied for these values of~$\omega$.
\begin{figure}
 \centering
 \includegraphics[width=0.8\linewidth]{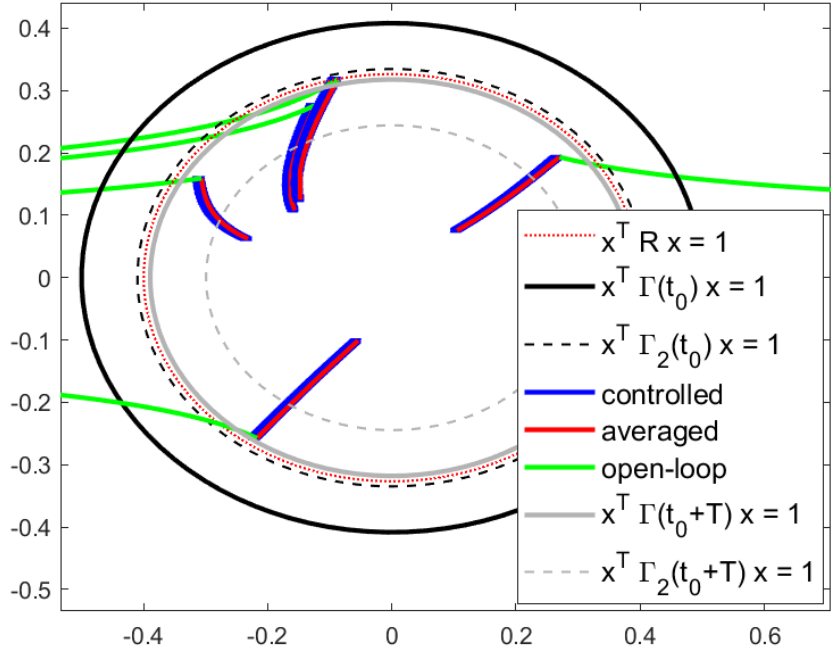}
 \caption{State trajectory of the open-loop, controlled and averaged system. It can be seen that the open-loop trajectory (green line) is not FTS wrt the chosen $\Gamma\,, R\,, T\,,$ and~$t_0$. The solid black and dashed black traces represent the ellipses associated to $\Gamma(t_0)$ and $\tilde{\Gamma}(t_0)$, the grey ones the ellipses defined by $\Gamma(t_0+T)$ and $\tilde{\Gamma)}(t_0+T$, while the dotted red circle represent the ellipse defined by $R$.}
 \label{fig:ex4-1}
\end{figure}
\begin{figure}
 \centering
 \includegraphics[width=0.7\linewidth]{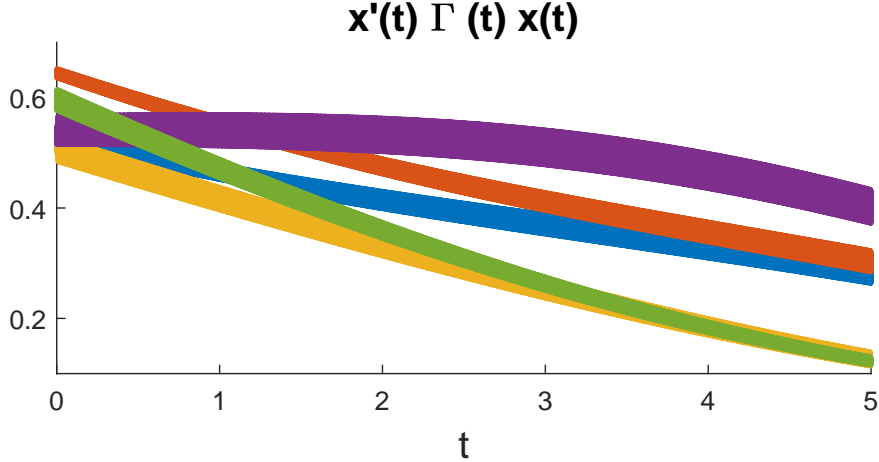}
 \caption{The product $x(t)^T,\Gamma(t)x(t)$ for five random choices of the initial state.}
 \label{fig:ex4-4}
\end{figure}

\section{Conclusions}\label{sec:conclusion}

In this work, an approach for the finite-time stabilization of LTV systems with unknown control direction based on a modified version of the standard Extremum-Seeking algorithm has been presented. The proposed methodology allows to design a static state-feedback law that finite-time stabilizes the system in an average sense. This, in turn, implies the finite-time stability of the system's state trajectories under the assumption that the dithering/mixing frequency $\omega$ is chosen high enough and that the $\Gamma(t)$ matrix in the FTS definition is modified opportunely ($\tilde{\Gamma}(t)$). 
Approximate indications on the choice of a minimum dithering/mixing frequency are also given, taking advantage of the \emph{time-scale separation} property on which the ES algorithm is based to derive a lower bound on $\sqrt{\omega}$ in the form of simple first or second order inequalities. Albeit approximate, the proposed numerical examples show that this bound is indeed capable of providing a satisfactory, and sometimes even quite conservative estimate of the minimum frequency needed, which still holds when the B matrix is slowly varying over time.

\bibliographystyle{./bibliography/IEEEtran}
\bibliography{./bibliography/IEEEabrv,./bibliography/references}

\end{document}